\documentclass[11pt]{amsart}
\usepackage{amssymb}
\usepackage{tikz-cd}
\usepackage{amsrefs}
\usepackage{xfrac}
\usepackage{thmtools}
\usepackage{thm-restate}
\usepackage{tikz}
\usetikzlibrary{decorations.pathmorphing}

\title{Transfinite Schreier families and cardinal invariants}

\author{Kevin Beanland}
\address{Department of Mathematics, Washington and Lee University, Lexington, VA 24450, USA}
\email{beanlandk@wlu.edu}
\urladdr{https://kbeanland.academic.wlu.edu}

\author{Christian Rosendal}
\address{Department of Mathematics, University of Maryland, College Park, MD 20742, USA}
\email{rosendal@umd.edu}
\urladdr{https://sites.google.com/view/christian-rosendal}

\date{}
\newcommand {\N}{\mathbb N}
\newcommand {\Q}{\mathbb Q}

\newcommand{\mgd}[2]{\{  {#1}\;|\; {#2} \} }
\newcommand{\Mgd}[2]{\big\{  {#1}\;\big|\; {#2} \big\} }
\newcommand{\MGd}[2]{\Big\{  {#1}\;\Big|\; {#2} \Big\} }

\newcommand{\om}{\omega}

\newcommand{\tom}{\emptyset}

\newcommand{\saa}{\Rightarrow}
\newcommand{\equi}{\Leftrightarrow}

\newcommand {\go} {\mathfrak}
\newcommand {\ku} {\mathcal}

\newcommand {\e} {\exists}

\newcommand{\Nstar}{\N^\star}
\newcommand{\FINstar}{\mathrm{FIN}^\star}
\newcommand{\Lim}{\mathrm{Lim}}
\newcommand{\dom}{\operatorname{dom}}
\newcommand{\cof}{\operatorname{cof}}
\newcommand{\I}[2]{[#1,#2]}
\newcommand{\bnum}{\go b}
\newcommand{\dnum}{\go d}
\newcommand{\cnum}{\go c}
\newcommand{\Sdot}{\ku S^\bullet}

\newtheorem{thm}{Theorem}
\newtheorem{cor}{Corollary}
\newtheorem{lemme}{Lemma}
\newtheorem{prop} {Proposition}

\begin{document}

\begin{abstract}
We study the dependence of the transfinite Schreier hierarchy on the choice of fundamental sequences for the countable limit ordinals.  With each Schreier family we associate an interval endpoint function.  We prove that the bounding number is equal to $\om_1$ exactly when the fundamental sequences may be chosen so that these endpoint functions form an unbounded family in the eventual domination order.  Equivalently, the hierarchy then satisfies the tail-covering property isolated by Shiliaev, or every infinite compact interval family has infinite intersection with some member of the hierarchy.  We also prove that the dominating number is equal to $\om_1$ exactly when the fundamental sequences may be chosen so that every compact family of finite subsets of $\Nstar=\{2,3,\ldots\}$ is contained in one Schreier family.  The latter equivalence combines Fremlin's cofinality theorem for compact subsets of the rationals with an absorption construction for compact families.
\end{abstract}

\subjclass[2020]{Primary 03E17; Secondary 06A07, 46B45, 54D30}

\keywords{Schreier families, systems of fundamental sequences, bounding number, dominating number, compact families of finite sets}

\maketitle

%%%%%%%%%%%%%%%%%%%%%%%%%%%%%%

\section{Introduction}

The classical Schreier family was introduced by J.~Schreier \cite{Schreier} to
construct a weakly null sequence no subsequence of which has
norm-convergent Cesàro means \cite{Schreier}.  Much later, D. E. Alspach and S. A. Argyros
introduced the transfinite hierarchy
$(\ku S_\alpha)_{\alpha<\omega_1}$ in their study of the ordinal
complexity of weakly null sequences \cite{AlspachArgyros}. Schreier-type families have since become standard
tools in Banach space theory, typically involved in the construction of spaces with predetermined geometric properties, e.g., \cite{ADKM} or serving as
direct measures of properties of spaces and vector sequences  \cite{AMT,OTW,beanland}.  The construction of these families depends on a choice of, for each countable limit ordinal $\lambda$, an increasing sequence of smaller ordinals increasing to $\lambda$.  Most applications begin by fixing such a system of sequences $\Lambda$  and the corresponding families $\ku S_\alpha=\ku S_\alpha^\Lambda$ once and for all.  The purpose of this note is instead to determine when $\Lambda$ can be chosen so that the resulting hierarchy has prescribed global covering properties.

This issue appears explicitly in recent work of M. Shiliaev \cite{Shiliaev}.  His Condition A asserts the existence of an $\aleph_1$-sized family of nondecreasing functions $\N\to\N$ which is unbounded under eventual domination.  This is precisely the assumption that the {\em bounding number} $\bnum$ equals $\om_1$.   Theorem~\ref{thm:b} extends the results of Shiliaev by giving  equivalent reformulations of $\bnum=\om_1$ in terms of interval endpoint functions and compact interval families.

A stronger universal covering property is governed by the {\em dominating number} $\dnum$.  In Theorem \ref{thm:d}, we prove that $\dnum=\om_1$ exactly when one can choose a single system of fundamental sequences $\Lambda$ such that every compact family of finite subsets of $\Nstar=\{2,3,\ldots\}$ is contained in some $\ku S^\Lambda_\alpha$.  The set-theoretic input is Fremlin's computation
\[
        \cof(\ku K(\Q),\subseteq)=\dnum,
\]
where $\ku K(\Q)$ is the family of compact subsets of $\Q$, ordered by inclusion.  Since the collection $\FINstar=[\Nstar]^{<\om}$ of all finite subsets of $\Nstar$, viewed as a subset of $2^{\Nstar}$, is homeomorphic to $\Q$, it remains to show that one compact family can always be absorbed by extending a prescribed countable partial system of fundamental sequences.  This is detailed in Proposition~\ref{prop:one-compact}.

%%%%%%%%%%%%%%%%%%%%%%%%%%%
%%%%%%%%%%%%%%%%%%%%%%%%%%%
%%%%%%%%%%%%%%%%%%%%%%%%%%%

\section*{Declaration of generative AI use}

The authors conceived of the present project and formulated both theorems in this paper in October 2025. During preparation of the manuscript,
OpenAI's ChatGPT and Codex systems were used (i) to generate draft
proofs of Proposition~\ref{prop:one-compact} and of the implication
{\rm(1)}$\Rightarrow${\rm(2)} in Theorem~\ref{thm:d}; (ii) to
generate an initial draft of the manuscript; and (iii) to assist
in checking mathematical arguments, references, and notation.
The authors independently checked the
relevant literature and every AI-assisted argument, corrected and
revised the generated material as necessary, and take full
responsibility for all statements, proofs, citations, and other content
of the paper. No AI system was used to formulate the statements of the
results.
%%%%%%%%%%%%%%%%%%%%%%%%%%%
%%%%%%%%%%%%%%%%%%%%%%%%%%%
%%%%%%%%%%%%%%%%%%%%%%%%%%%

\section{Schreier families and endpoint functions}
We work over $\Nstar=\{2,3,\ldots\}$ as opposed to $\{1,2,\ldots\}$.  This harmless shift makes the limit clause uniform and allows the non-trivial part of every successor step to involve at least two blocks.
We identify every $A\in\FINstar$ with its characteristic function ${\bf 1}_A\in 2^{\Nstar}$.  Thus compactness of subfamilies of $\FINstar$ always refers to the topology inherited from the compact space $2^{\Nstar}$.

For non-empty $E,F\in\FINstar$, write $E<F$ when $\max E<\min F$.  Also, for $d\in\Nstar$ and non-empty $A\in\FINstar$, write
$d\leqslant A$ when $d\leqslant\min A$.
Consequently,
\[
        d\leqslant A_1<\cdots<A_d
\]
means that $A_1,\ldots,A_d$ are non-empty, $d\leqslant\min A_1$ and $A_1<\cdots<A_d$.

Let $\Lim(\om_1)$ be the set of countable limit ordinals.  A \emph{system of fundamental sequences} is a family
\[
        \Lambda=(\lambda[n])_{\lambda\in\Lim(\om_1),\ n\in\Nstar}
\]
such that, for every $\lambda\in\Lim(\om_1)$,
\[
        \lambda[2]<\lambda[3]<\cdots<\lambda
     =
        \sup_{n\in\Nstar}\lambda[n].
\]
In the Schreier-family literature, these are also called approximating sequences.

Given $\Lambda$, define $(\ku S^\Lambda_\alpha)_{\alpha<\om_1}$ recursively by
\[
        \ku S^\Lambda_0=\{\tom\}\cup\Mgd{\{n\}}{n\in\Nstar},
\]
\[
        \ku S^\Lambda_{\alpha+1}=\ku S^\Lambda_\alpha\cup
        \MGd{\bigcup_{i=1}^d A_i}
        {2\leqslant d\leqslant A_1<\cdots<A_d\;\text{and}\; A_i\in\ku S^\Lambda_\alpha\setminus\{\tom\}},
\]
and, for $\lambda\in\Lim(\om_1)$,
\[
        \ku S^\Lambda_\lambda=\{\tom\}\cup
        \Mgd{A\in\FINstar\setminus\{\tom\}}
        {\e d\in\Nstar\;\big(d\leqslant A\;\;\&\;\;A\in\ku S^\Lambda_{\lambda[d]}\big)}.
\]

For $n\leqslant k$ in $\Nstar$, write $\I{n}{k}=\{n,n+1,\ldots,k\}$.
An \emph{interval family} is a family $\ku K\subseteq\FINstar$ whose non-empty members are intervals.
Recall that $\ku K\subseteq\FINstar$ is \emph{hereditary} if $A\in\ku K$ and $B\subseteq A$ imply $B\in\ku K$, and is \emph{spreading} if
\[
        \{a_1<\cdots<a_r\}\in\ku K,\qquad a_i\leqslant b_i
\]
imply $\{b_1<\cdots<b_r\}\in\ku K$. The following facts are entirely standard and are easily  checked by transfinite induction.

\begin{lemme}\label{lem:finite-successors}
Let $\Lambda$ be a system of fundamental sequences and $\alpha<\omega_1$. Then
\begin{enumerate}
\item $\ku S^\Lambda_\alpha$ is hereditary, spreading, compact and contains all singletons,
\item $\ku S^\Lambda_\alpha\subseteq\ku S^\Lambda_{\alpha+1}$,
\item if $\tom\neq E\in\FINstar$, then $E\in\ku S^\Lambda_{\alpha+(|E|-1)}$.
\end{enumerate}
\end{lemme}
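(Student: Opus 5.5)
We prove (1) and (2) together by transfinite induction on $\alpha$, and then derive (3) from (1) and (2) by induction on $|E|$.

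\emph{Base case and successor step of (1).} The family $\ku S^\Lambda_0$ consists of $\tom$ and the singletons. It is clearly hereditary and spreading. It is compact because its only accumulation point in $2^{\Nstar}$ is ${\bf 1}_\tom$, which belongs to it.

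For the successor step, suppose $\ku S^\Lambda_\alpha$ satisfies (1) and let $A=\bigcup_{i=1}^d A_i$ with $2\leqslant d\leqslant A_1<\cdots<A_d$.
\begin{itemize}
\item Heredity. If $B\subseteq A$, then $B=\bigcup_i(B\cap A_i)$. Discard the empty pieces to get $d'\leqslant d$ blocks, each in $\ku S^\Lambda_\alpha$ by heredity. The condition $d'\leqslant\min B$ still holds, since $\min B\geqslant\min A_1\geqslant d$. If $d'\leqslant1$, then $B\in\ku S^\Lambda_\alpha$ directly.
\item Spreading. Push each block forward coordinatewise. The images remain successive, the first minimum only increases, and each image block lies in $\ku S^\Lambda_\alpha$ by spreading.
\item Compactness. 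Take $A^{(k)}\to X$ in $2^{\Nstar}$ with $A^{(k)}\in\ku S^\Lambda_{\alpha+1}$. The number of blocks of $A^{(k)}$ is at most $\min A^{(k)}$. If $\min A^{(k)}\to\infty$, then $X=\tom$. Otherwise pass to a subsequence on which $\min A^{(k)}$, and hence the number of blocks $d$, is constant. Pass to a further subsequence so that each of the $d$ blocks converges, to limits $X_1,\dots,X_d\in\ku S^\Lambda_\alpha$. Each $X_i$ is finite because $\ku S^\Lambda_\alpha\subseteq\FINstar$ is compact. Then $X=\bigcup X_i$. After deleting empty $X_i$ (all empty $X_i$ come after the nonempty ones, since blocks are ordered), we obtain a legitimate decomposition, or a single block in $\ku S^\Lambda_\alpha$.
\item Singletons. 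These lie in $\ku S^\Lambda_\alpha\subseteq\ku S^\Lambda_{\alpha+1}$.
\end{itemize}
Note that (2) holds by definition.

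\emph{Limit step of (1).} Here $\ku S^\Lambda_\lambda=\{\tom\}\cup\bigcup_d\{A\in\ku S^\Lambda_{\lambda[d]}\setminus\{\tom\}: d\leqslant A\}$. Heredity and spreading pass through directly, because passing to a nonempty subset or spreading forward does not decrease $\min A$, and each $\ku S^\Lambda_{\lambda[d]}$ has these properties.

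Compactness is the main obstacle. For a convergent sequence $A^{(k)}\to X\neq\tom$, the minima $\min A^{(k)}$ are eventually equal to some fixed $m=\min X$. Choose a witness $d^{(k)}\leqslant m$ for each $k$. Only finitely many values are possible, so pass to a subsequence with a constant witness $d$. Then all $A^{(k)}$ lie in the compact family $\ku S^\Lambda_{\lambda[d]}$, so $X\in\ku S^\Lambda_{\lambda[d]}$ with $d\leqslant X$.

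\emph{Proof of (3).} We use induction on $|E|$ with $\alpha$ fixed. If $|E|=1$, then $E\in\ku S^\Lambda_0$, and we need $\ku S^\Lambda_0\subseteq\ku S^\Lambda_\alpha$. This monotonicity $\ku S^\Lambda_\beta\subseteq\ku S^\Lambda_\alpha$ for $\beta\leqslant\alpha$ is not literally stated, so we use the weaker fact that singletons lie in every family, which is already part of (1).

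For the inductive step, write $E=F\cup\{e\}$ with $F<\{e\}$ and $|F|=|E|-1\geqslant1$. By induction $F\in\ku S^\Lambda_{\alpha+(|F|-1)}$. Since $\min F\geqslant2$, the pair of blocks $2\leqslant F<\{e\}$ is admissible. Also $\{e\}\in\ku S^\Lambda_{\alpha+(|F|-1)}$ by (1). Hence $E\in\ku S^\Lambda_{\alpha+|F|}=\ku S^\Lambda_{\alpha+(|E|-1)}$. This is exactly where working over $\Nstar$, so that $\min F\geqslant2$, is used.
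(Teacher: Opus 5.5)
Your proof is correct and is the transfinite induction the paper alludes to; the paper gives no proof beyond saying these facts are "easily checked by transfinite induction". The steps you leave implicit are all immediate: $\ku S^\Lambda_\lambda$ contains every singleton $\{n\}$ via the witness $d=2\leqslant n$, and in the successor compactness step the nonempty limit blocks remain successive with $\min X=m\geqslant d$.
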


For $\alpha<\om_1$ and $n\in\Nstar$, define
\[
        s^\Lambda_\alpha(n)=
        \max\Mgd{k\in\Nstar}{n\leqslant k\;\&\;\I{n}{k}\in\ku S^\Lambda_\alpha}.
\]
This maximum exists since the defining set is non-empty and finite by compactness of $\ku S^\Lambda_\alpha$.  
Also, for $g,h:\Nstar\to\Nstar$, write $g\leqslant^*h$ when $g(n)\leqslant h(n)$ for all but finitely many $n$.
We shall use the following elementary facts without further comment.

\begin{lemme}\label{lem:elementary}
\begin{enumerate}
\item An interval family $\ku K\subseteq\FINstar$ is compact if and only if either (i) $\ku K$ is finite  or (ii) $\tom\in\ku K$ and, for every $n\in\Nstar$, we have $n\leqslant I$ for all but finitely many $I\in \ku K\setminus \{\tom\}$.
\item Every infinite compact interval family contains successive nonempty intervals $I_1<I_2<\cdots$.
\item If $\ku F\subseteq(\Nstar)^{\Nstar}$ is $\leqslant^*$-unbounded, then $\sup_{f\in\ku F}f(n)=\infty$ for infinitely many $n\in \Nstar$.
\end{enumerate}
\end{lemme}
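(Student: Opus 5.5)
We prove the three items of Lemma~\ref{lem:elementary} separately. Only the structure of intervals and the topology of $2^{\Nstar}$ are needed.

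\emph{Item (1).} First suppose $\ku K$ is compact and infinite. Any infinite subfamily of $\FINstar$ has a convergent subsequence in $2^{\Nstar}$. Consider distinct non-empty intervals $I_j\in\ku K$ all satisfying $\min I_j\leqslant n$, for a fixed $n$. Only finitely many minima are then possible, so we may pass to a subsequence with a common minimum $m$. Since the $I_j$ are distinct, their maxima tend to infinity, so ${\bf 1}_{I_j}$ converges to ${\bf 1}_{[m,\infty)}$. This limit is an infinite set, hence not in $\FINstar$, which contradicts compactness (closedness in $2^{\Nstar}$). Hence for each $n$ only finitely many non-empty $I\in\ku K$ have $\min I< n$; replacing $n$ by $n+1$ if needed gives $n\leqslant I$ for cofinitely many $I$. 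Moreover, infinitely many distinct members then have minima tending to infinity, so they converge pointwise to $\tom$, and closedness gives $\tom\in\ku K$.

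For the converse, finite families are trivially compact. In case (ii), take any sequence in $\ku K$. Either it takes some value infinitely often, giving a constant subsequence, or it has infinitely many distinct terms. In the latter case their minima eventually exceed every $n$, so a subsequence converges to $\emptyset\in\ku K$. Thus $\ku K$ is sequentially closed in the metrizable space $2^{\Nstar}$, hence closed and therefore compact.

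\emph{Item (2).} By (1), an infinite compact interval family $\ku K$ contains, for every $n$, all but finitely many of its non-empty members with $n\leqslant I$; in particular it has infinitely many non-empty members. We build the sequence recursively. Pick any non-empty $I_1\in\ku K$. Given $I_j$, apply (1) with $n=\max I_j+1$ to obtain a non-empty $I_{j+1}\in\ku K$ with $\min I_{j+1}>\max I_j$, i.e.\ $I_j<I_{j+1}$.

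\emph{Item (3).} We argue by contraposition. Suppose that for all $n\geqslant n_0$ the value $h(n)=\sup_{f\in\ku F}f(n)$ is finite. Define $h(n)=1$ for $n<n_0$ (or any value in $\Nstar$). Then every $f\in\ku F$ satisfies $f(n)\leqslant h(n)$ for all $n\geqslant n_0$, so $f\leqslant^* h$ and $\ku F$ is bounded. Note that the finite values are in $\Nstar$ automatically.

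None of these steps is a serious obstacle. The only point needing care is in (1): the limit of intervals with a fixed minimum and growing maximum is infinite, which is exactly what forces $\min I\to\infty$.
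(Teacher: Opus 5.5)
Your proof is correct and is the routine verification the paper means when it says all three items are immediate from the product topology on $2^{\Nstar}$ and the definition of $\leqslant^*$; the interval hypothesis enters exactly where you use it, since intervals with a fixed minimum $m$ and unbounded maxima converge to the infinite set $\{m,m+1,\ldots\}$. The only slip is cosmetic: in (3) set $h(n)=2$ rather than $1$ for $n<n_0$, since $1\notin\Nstar$.
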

All three assertions are immediate from the product topology on $2^{\Nstar}$ and the definition of eventual domination $\leqslant^*$.

%%%%%%%%%%%%%%%%%%%%%%%%%%%
%%%%%%%%%%%%%%%%%%%%%%%%%%%
%%%%%%%%%%%%%%%%%%%%%%%%%%%
%%%%%%%%%%%%%%%%%%%%%%%%%%%
%%%%%%%%%%%%%%%%%%%%%%%%%%%
%%%%%%%%%%%%%%%%%%%%%%%%%%%

\section{The bounding number}
  The bounding number $\bnum$ is the least size of a $\leqslant^*$-unbounded family in $(\Nstar)^{\Nstar}$, while the dominating number $\dnum$ is the least size of a $\leqslant^*$-cofinal family.  Thus
\[
        \om_1\leqslant\bnum\leqslant\dnum\leqslant\cnum=2^{\aleph_0}.
\]
Every pattern of equalities and strict inequalities compatible with this chain is relatively consistent with ZFC; see Blass \cite{Blass}.

\begin{lemme}\label{lemme:b}
Fix a system of fundamental sequences $\Lambda$ and consider the following
statements.
\begin{enumerate}
\item For every uncountable $A\subseteq\om_1$, there is $n\in\Nstar$ with
\[
        [\{n,n+1,\ldots\}]^{<\om}
        \subseteq
        \bigcup_{\alpha\in A}\ku S^\Lambda_\alpha.
\]
\item For every infinite compact interval family $\ku K\subseteq\FINstar$, there is $\alpha<\om_1$ for which $\ku K\cap\ku S^\Lambda_\alpha$ is infinite.
\item $(s^\Lambda_\alpha)_{\alpha<\om_1}$ is $\leqslant^*$-unbounded in $(\Nstar)^{\Nstar}$.
\end{enumerate}
Then (1)$\saa$(2)$\saa$(3).
\end{lemme}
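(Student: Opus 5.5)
For (1)$\saa$(2), I would argue by contradiction. Let $\ku K$ be an infinite compact interval family and suppose $\ku K\cap\ku S^\Lambda_\alpha$ is finite for every $\alpha<\om_1$. By Lemma~\ref{lem:elementary}(2), fix successive nonempty intervals $I_1<I_2<\cdots$ in $\ku K$.

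For each $\alpha$, only finitely many $I_j$ lie in $\ku S^\Lambda_\alpha$. Let $j(\alpha)$ be the least index beyond which no $I_j$ belongs to $\ku S^\Lambda_\alpha$. Since $\om_1$ is uncountable and $j$ maps into $\N$, some value $j_0$ is attained on an uncountable set $A\subseteq\om_1$. Thus for every $\alpha\in A$ and every $j\geqslant j_0$ we have $I_j\notin\ku S^\Lambda_\alpha$.

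Now apply (1) to $A$ to get $n$ with $[\{n,n+1,\ldots\}]^{<\om}\subseteq\bigcup_{\alpha\in A}\ku S^\Lambda_\alpha$. Choose $j\geqslant j_0$ with $\min I_j\geqslant n$; this is possible because the intervals are successive, so their minima tend to infinity. Then $I_j\in\ku S^\Lambda_\alpha$ for some $\alpha\in A$, which is a contradiction.

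For (2)$\saa$(3), I would prove the contrapositive. Suppose $s^\Lambda_\alpha\leqslant^* h$ for all $\alpha$, where we may take $h\geqslant\mathrm{id}$. I would build a compact interval family that defeats every $s^\Lambda_\alpha$. Define $n_1=2$, $n_{i+1}=h(n_i)+2$, and put
\[
        \ku K=\{\tom\}\cup\mgd{\I{n_i}{h(n_i)+1}}{i\geqslant 1}.
\]
These intervals are successive and their minima tend to infinity, so $\ku K$ is compact and infinite by Lemma~\ref{lem:elementary}(1).

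Fix $\alpha$. For all but finitely many $i$ we have $s^\Lambda_\alpha(n_i)\leqslant h(n_i)<h(n_i)+1$. By maximality in the definition of $s^\Lambda_\alpha$, this means $\I{n_i}{h(n_i)+1}\notin\ku S^\Lambda_\alpha$. Hence $\ku K\cap\ku S^\Lambda_\alpha$ is finite (allowing for $\tom$), so (2) fails.

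The only delicate points are the pigeonhole step in the first implication and the observation that, because $\ku S^\Lambda_\alpha$ is hereditary, $s^\Lambda_\alpha$ exactly captures which initial intervals from $n$ belong to $\ku S^\Lambda_\alpha$. Neither should present real difficulty. The main care needed is in the pigeonhole step: the uncountable set $A$ must be chosen uniformly before invoking (1).
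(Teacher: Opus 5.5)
Your proof is correct and follows the paper's argument. For (1)$\Rightarrow$(2) you use a pigeonhole over $\omega_1$ on a finite threshold, and for (2)$\Rightarrow$(3) you use the contrapositive with intervals $[n,h(n)+1]$; the only differences are cosmetic (passing to successive intervals $I_1<I_2<\cdots$ and sparse starting points $n_i$ rather than working with all members of $\mathcal K$ and all $n$ via Lemma~\ref{lem:elementary}(1)).
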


\begin{proof}
(1)$\saa$(2):  Suppose $\ku K$ is an infinite compact interval family such that, for all $\alpha<\om_1$, the intersection $\ku K\cap\ku S^\Lambda_\alpha$ is finite. This means that, for every $\alpha<\omega_1$, there is some $n_\alpha\in \Nstar$ so that
$$
n_\alpha\leqslant I\in \ku K\setminus \{\tom\} \;\saa\; I\notin \ku S^\Lambda_\alpha.
$$
There is therefore some uncountable $A\subseteq\om_1$ and $n\in \Nstar$ so that $n_\alpha=n$ for all $\alpha\in A$. Since, for every $m$,  there is some $\max\{n,m\}\leqslant I\in \ku K\setminus \{\tom\}$, we find that
$$
[\{m,m+1,\ldots\}]^{<\om} \not\subseteq \bigcup_{\alpha\in A}\ku S^\Lambda_\alpha
$$
for all $m$.

(2)$\saa$(3): Suppose that $s_\alpha^\Lambda\leqslant^* g$ for all $\alpha<\omega_1$ where $g$ is chosen such that $n\leqslant g(n)$ for all $n\in \Nstar$ and let 
$$
\ku K=\{\tom\}\cup \Mgd{[n,g(n)+1]}{n\in\Nstar}.
$$
Then $\ku K$ is an infinite compact interval family and, if $\alpha<\omega_1$ is given, there is some $m$ so that $s_\alpha^\Lambda(n)\leqslant g(n)$ and hence
$$
[n,g(n)+1]\notin \ku S^\Lambda_\alpha
$$
for all $n\geqslant m$.  It follows that $\ku K\cap \ku S^\Lambda_\alpha$ is finite. 
\end{proof}

\begin{thm}\label{thm:b}
The following assertions are equivalent.
\begin{enumerate}
\item $\bnum=\om_1$.
\item There is a system of fundamental sequences $\Lambda$ such that $(s^\Lambda_\alpha)_{\alpha<\om_1}$ is $\leqslant^*$-unbounded in $(\Nstar)^{\Nstar}$.
\item There is a system of fundamental sequences $\Lambda$ such that, for every infinite compact interval family $\ku K\subseteq\FINstar$, there is $\alpha<\om_1$ for which $\ku K\cap\ku S^\Lambda_\alpha$ is infinite.
\item There is a system of fundamental sequences $\Lambda$ such that, for every uncountable $A\subseteq\om_1$, there is $n\in\Nstar$ with
\[
        [\{n,n+1,\ldots\}]^{<\om}
        \subseteq
        \bigcup_{\alpha\in A}\ku S^\Lambda_\alpha.
\]
\end{enumerate}
\end{thm}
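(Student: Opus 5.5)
The implications (4)$\saa$(3)$\saa$(2) are exactly Lemma~\ref{lemme:b}, applied to the witnessing $\Lambda$. (2)$\saa$(1) is immediate: $(s^\Lambda_\alpha)_{\alpha<\om_1}$ is then an unbounded family of size at most $\aleph_1$, so $\bnum\leqslant\om_1$. So the work is (1)$\saa$(4), and I would prove it by building $\Lambda$ directly from an unbounded $\om_1$-sequence.

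\emph{Choosing the functions.} Assuming $\bnum=\om_1$, I would first fix a $\leqslant^*$-increasing, $\leqslant^*$-unbounded sequence $(f_\xi)_{\xi<\om_1}$ of nondecreasing functions $\Nstar\to\Nstar$. This is the standard recursion: dominate the first $\xi$ members of an unbounded family together with all earlier $f_\eta$, which is possible since countable families are bounded, and then replace $f$ by $n\mapsto\max_{j\leqslant n}f(j)$. Because the sequence is $\leqslant^*$-increasing, every uncountable subfamily $\{f_\xi\}_{\xi\in B}$ is still unbounded. By Lemma~\ref{lem:elementary}(3), $\sup_{\xi\in B}f_\xi(m)=\infty$ for some $m$. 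Since every $f_\xi$ is nondecreasing, this supremum is then infinite for \emph{every} $m'\geqslant m$. This monotonicity upgrade is what turns ``infinitely many $n$'' into ``a whole tail''.

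\emph{Building $\Lambda$.} For $\xi<\om_1$ put $\lambda_\xi=\om\cdot(\xi+1)$ and
\[
        \lambda_\xi[d]=\om\cdot\xi+f_\xi(d)+d,
\]
which is strictly increasing with supremum $\lambda_\xi$. All other limit ordinals get arbitrary fundamental sequences. By Lemma~\ref{lem:finite-successors}(3), $\I{m}{m+f_\xi(m)}\in\ku S^\Lambda_{\om\cdot\xi+f_\xi(m)}$. By Lemma~\ref{lem:finite-successors}(2) this set lies in $\ku S^\Lambda_{\lambda_\xi[m]}$, and since its minimum is $m$, it lies in $\ku S^\Lambda_{\lambda_\xi}$. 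Hence $s^\Lambda_{\lambda_\xi}(m)\geqslant f_\xi(m)$ for every $m$.

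\emph{Verifying (4).} Let $A\subseteq\om_1$ be uncountable. For $\alpha\in A$ with $\alpha\geqslant\om$, let $\xi(\alpha)$ be the largest $\xi$ with $\lambda_\xi\leqslant\alpha$; the map $\alpha\mapsto\xi(\alpha)$ is countable-to-one, so its range on $A$ is uncountable. Here I need the standard almost-monotonicity fact, valid for every $\Lambda$ and checked by induction on $\alpha$: for $\beta<\alpha$ there is $n$ with $\ku S^\Lambda_\beta\cap[\{n,n+1,\ldots\}]^{<\om}\subseteq\ku S^\Lambda_\alpha$. It gives $n_\alpha$ with $s^\Lambda_\alpha(m)\geqslant f_{\xi(\alpha)}(m)$ for $m\geqslant n_\alpha$. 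Pigeonholing, there are an uncountable $A'\subseteq A$, on which $\xi$ is still injective after thinning, and a single $n$ with $n_\alpha=n$ for $\alpha\in A'$. By the first step, there is $m_0\geqslant n$ with $\sup_{\alpha\in A'}s^\Lambda_\alpha(m)=\infty$ for all $m\geqslant m_0$. Given a nonempty finite $F\subseteq\{m_0,m_0+1,\ldots\}$, choose $\alpha\in A'$ with $s^\Lambda_\alpha(\min F)\geqslant\max F$. Then $F\subseteq\I{\min F}{\max F}\in\ku S^\Lambda_\alpha$, so $F\in\ku S^\Lambda_\alpha$ by hereditariness, and $\tom$ is trivially covered. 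This proves (4) with $n=m_0$.

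The main obstacle is the almost-monotonicity lemma. At successor steps, from $\alpha$ to $\alpha+1$, it follows from Lemma~\ref{lem:finite-successors}(2). At a limit $\lambda>\beta$, one picks $d_0$ with $\lambda[d_0]>\beta$, applies the induction hypothesis to $\beta<\lambda[d_0]$ to get a threshold $n'$, and takes $n=\max\{n',d_0\}$. A set with minimum at least $n$ is then in $\ku S^\Lambda_{\lambda[d_0]}$, and since $d_0\leqslant\min$, it lies in $\ku S^\Lambda_\lambda$. The case where $\beta$ is reached through limits below $\alpha$ needs the induction to be carried out for all $\beta<\alpha$ simultaneously.
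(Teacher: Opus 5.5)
\textbf{The gap is in the definition of $\xi(\alpha)$.} Everything else checks out: the reductions (4)$\saa$(3)$\saa$(2)$\saa$(1), the choice of the $f_\xi$, the bound $s^\Lambda_{\lambda_\xi}(m)\geqslant f_\xi(m)$, and your almost-monotonicity induction. Write $\alpha=\om\cdot\zeta+r$ with $\zeta\geqslant1$ and $r<\om$. Then $\lambda_\xi=\om\cdot(\xi+1)\leqslant\alpha$ if and only if $\xi<\zeta$. So ``the largest $\xi$ with $\lambda_\xi\leqslant\alpha$'' exists only when $\zeta$ is a successor. For $\alpha=\om^2$, for instance, the admissible $\xi$ are exactly the finite ordinals, and there is no largest one. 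If $A=\{\om^2\cdot\eta:1\leqslant\eta<\om_1\}$, your $\xi(\alpha)$ is undefined at every point of $A$, so (4) is not verified as written. Nor can you patch this by simply choosing some $\xi(\alpha)<\zeta$ for each such $\alpha$. Any such choice is regressive on the club $A$, so by Fodor's lemma it is constant on a stationary subset. Hence no choice makes $\alpha\mapsto\xi(\alpha)$ countable-to-one on $A$.

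\textbf{How to repair it, and how the paper avoids it.} You must thin $A$ before choosing. Since $\alpha\mapsto\zeta(\alpha)$ is countable-to-one, pick $\alpha_\nu\in A\setminus\om$ for $\nu<\om_1$ with $\zeta(\alpha_\nu)>\xi_\nu:=\sup_{\mu<\nu}\zeta(\alpha_\mu)$, and set $\xi(\alpha_\nu)=\xi_\nu$. Then $\lambda_{\xi_\nu}\leqslant\om\cdot\zeta(\alpha_\nu)\leqslant\alpha_\nu$, and $\nu\mapsto\xi_\nu$ is strictly increasing, so the rest of your argument goes through on $\{\alpha_\nu:\nu<\om_1\}$. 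The paper avoids this selection entirely by prescribing a special fundamental sequence at \emph{every} countable limit $\lambda$: an arbitrary cofinal sequence shifted by the values $b_\lambda(n)$. This gives $\I{n}{b_\lambda(n)}\in\ku S^\Lambda_\lambda$ for all limits $\lambda$. Every $\alpha\in A$ with $\alpha\geqslant\om$ then has the form $\lambda+r$ with $\lambda$ a limit. Pigeonholing on $r$ gives an uncountable set of such $\lambda$, and $\ku S^\Lambda_\lambda\subseteq\ku S^\Lambda_{\lambda+r}$ by Lemma~\ref{lem:finite-successors}(2). So the paper needs neither your almost-monotonicity lemma nor any thinning. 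Finally, your upgrade to nondecreasing $f_\xi$ to get a whole tail is unnecessary. A single $m$ with $\sup_{\alpha}s^\Lambda_\alpha(m)=\infty$ suffices, since any finite $F\subseteq\{m,m+1,\ldots\}$ lies in $\I{m}{\max F}$ and heredity applies.
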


Shiliaev’s Condition A \cite{Shiliaev} is equivalent to \(\mathfrak b=\omega_1\). His Lemma 4.3 and Corollary 3.5 contain the core of the implication from (1) to (4), while Remark 6.1 and the arguments of §6 contain ingredients for the converse obstruction and for the endpoint formulation (2). These equivalences are not stated there. We give a direct proof in the present conventions and add the compact-interval characterization (3).

\begin{proof}
The implication (2)$\saa$(1) is immediate and Lemma \ref{lemme:b} gives the implications (4)$\saa$(3)$\saa$(2). It thus remains to show (1)$\saa$(4).

So suppose that $\bnum=\om_1$ and let $(b_\xi)_{\xi<\om_1}$ be $\leqslant^*$-increasing and unbounded.  Without loss of generality, we may suppose that $n\leqslant b_\lambda(n)$ for all $\lambda<\om_1$ and $n\in \Nstar$.
We first construct $\Lambda$ so that, for all $\lambda\in\Lim(\om_1)$ and $n\in\Nstar$,
$$
n\leqslant [n,b_\lambda(n)]\in \ku S^\Lambda_{\lambda[n]}
$$
and thus $[n,b_\lambda(n)]\in \ku S^\Lambda_{\lambda}$. 

So fix $\lambda\in\Lim(\om_1)$. Pick ordinals $\eta_2<\eta_3<\ldots<\lambda=\sup_n\eta_n$ and define inductively
$$
\lambda[2]=\eta_2+b_\lambda(2), \qquad \lambda[n+1]=\max\big\{\eta_{n+1}, \lambda[n]\big\}+b_\lambda(n+1).
$$
Then $\lambda[2]<\lambda[3]<\ldots<\lambda=\sup_n\lambda[n]$ and, for any $n$, 
$$
\lambda[n]=\alpha+\big(\big|[n,b_\lambda(n)]\big|-1\big)
$$
for some ordinal $\alpha$ and thus 
$$
[n,b_\lambda(n)]\in \ku S^\Lambda_{\lambda[n]}
$$
by Lemma~\ref{lem:finite-successors}. This finishes the construction.

Suppose now that $A\subseteq \omega_1$ is an uncountable set and find some $r\in \omega$ and uncountable $B\subseteq \Lim(\om_1)$ so that $\lambda+r\in A$ for all $\lambda\in B$. Then $(b_\lambda)_{\lambda\in B}$ is $\leqslant^*$-increasing and unbounded and hence $\sup_{\lambda\in B}b_\lambda(n)=\infty$ for some $n\in \Nstar$. Moreover, 
$$
[n, b_\lambda(n)]\in \ku S_{\lambda}^\Lambda\subseteq \ku S_{\lambda+r}^\Lambda\subseteq   \bigcup_{\alpha\in A}\ku S^\Lambda_\alpha
$$
for all $\lambda \in B$, which shows that
$$
[\{n,n+1,\ldots\}]^{<\om} \subseteq   \bigcup_{\alpha\in A}\ku S^\Lambda_\alpha.
$$
\end{proof}

\begin{cor}\label{cor:b-large}
The following assertions are equivalent.
\begin{enumerate}
\item $\bnum>\om_1$.
\item For every system of fundamental sequences $\Lambda$, there are intervals $F_1<F_2<\cdots$ such that
\[
        \mgd{i\geqslant1}{F_i\in\ku S^\Lambda_\alpha}
\]
is finite for every $\alpha<\om_1$.
\end{enumerate}
\end{cor}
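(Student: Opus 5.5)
The plan is to obtain Corollary~\ref{cor:b-large} by negating Theorem~\ref{thm:b}, in the form of the equivalence (1)$\equi$(3) there. It suffices to show the following: for a fixed $\Lambda$, the failure of condition (3) of Theorem~\ref{thm:b} is equivalent to the existence of intervals $F_1<F_2<\cdots$ with $\mgd{i}{F_i\in\ku S^\Lambda_\alpha}$ finite for every $\alpha<\om_1$. Once this is established, $\bnum>\om_1$ holds if and only if (3) fails for every $\Lambda$, and that is exactly assertion (2) of the corollary.

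For the direction from (1) to (2), assume $\bnum>\om_1$ and fix $\Lambda$. By Theorem~\ref{thm:b} there is an infinite compact interval family $\ku K$ with $\ku K\cap\ku S^\Lambda_\alpha$ finite for every $\alpha<\om_1$. Lemma~\ref{lem:elementary}(2) then provides successive nonempty intervals $F_1<F_2<\cdots$ in $\ku K$. Since the $F_i$ are pairwise distinct, the set $\mgd{i}{F_i\in\ku S^\Lambda_\alpha}$ injects into $\ku K\cap\ku S^\Lambda_\alpha$, so it is finite.

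For the direction from (2) to (1), suppose $\bnum=\om_1$. Theorem~\ref{thm:b} gives a $\Lambda$ satisfying (3) there. Given any intervals $F_1<F_2<\cdots$, put $\ku K=\{\tom\}\cup\mgd{F_i}{i\geqslant1}$. Since $\min F_i\to\infty$, Lemma~\ref{lem:elementary}(1) shows that $\ku K$ is an infinite compact interval family. Hence $\ku K\cap\ku S^\Lambda_\alpha$ is infinite for some $\alpha$, which means infinitely many $F_i$ lie in $\ku S^\Lambda_\alpha$. So (2) fails for this $\Lambda$.

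There is no real obstacle here. The only points needing care are the compactness check for $\ku K$ and the fact that the $F_i$ are distinct, so that counting indices is the same as counting members of $\ku K$.
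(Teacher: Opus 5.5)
Your proposal is correct and follows the paper's own proof. Like the paper, you negate the equivalence (1)$\equi$(3) of Theorem~\ref{thm:b}, extract successive intervals from a witnessing compact interval family via Lemma~\ref{lem:elementary}, and conversely turn a sequence $F_1<F_2<\cdots$ into the compact interval family $\{\tom\}\cup\mgd{F_i}{i\geqslant1}$. Your remarks that the $F_i$ are distinct and that $\min F_i\to\infty$ spell out checks the paper leaves implicit.
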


\begin{proof}
Since $\om_1\leqslant\bnum$, the first assertion is the negation of Theorem~\ref{thm:b}(1), and hence of Theorem~\ref{thm:b}(3).  From a compact interval family witnessing this negation, extract $F_1<F_2<\cdots$ by Lemma~\ref{lem:elementary}.  Conversely, such a sequence gives the compact interval family
\[
        \{\tom\}\cup\mgd{F_i}{i\geqslant1}.
\]
\end{proof}

%%%%%%%%%%%%%%%%%%%%%%%%%%%
%%%%%%%%%%%%%%%%%%%%%%%%%%%
%%%%%%%%%%%%%%%%%%%%%%%%%%%
%%%%%%%%%%%%%%%%%%%%%%%%%%%
%%%%%%%%%%%%%%%%%%%%%%%%%%%
%%%%%%%%%%%%%%%%%%%%%%%%%%%

\section{Absorbing compact families}

The interval case requires no set-theoretic hypothesis and can be absorbed already at the first limit level.

\begin{prop}\label{prop:interval-omega}
If $\ku K\subseteq\FINstar$ is a compact interval family, then there is a system of fundamental sequences $\Lambda$ such that
\[
        \ku K\subseteq\ku S^\Lambda_\om.
\]
Only the fundamental sequence for $\om$ needs to be prescribed.
\end{prop}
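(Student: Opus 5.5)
Our plan is to choose only the sequence $\om[n]$ ($n\in\Nstar$) so that every non-empty $I\in\ku K$ with $\min I=n$ lies in $\ku S^\Lambda_{\om[n]}$. The remaining fundamental sequences can be chosen arbitrarily, since $\ku S^\Lambda_\om$ depends only on the finite levels $\ku S^\Lambda_m$ ($m<\om$) and on $\om[\cdot]$. By the limit clause with $d=n=\min I$, we have $n\leqslant I$ and $I\in\ku S^\Lambda_{\om[n]}$, so $I\in\ku S^\Lambda_\om$. The empty set belongs to every $\ku S^\Lambda_\alpha$ by definition.

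The key steps are as follows. First, by Lemma~\ref{lem:elementary}(1), either $\ku K$ is finite or every $n$ satisfies $n\leqslant I$ for all but finitely many non-empty $I\in\ku K$. In either case, for each $n\in\Nstar$ the set of non-empty $I\in\ku K$ with $\min I=n$ is finite. Hence
\[
        k(n)=\max\big(\{n\}\cup\mgd{\max I}{I\in\ku K,\ I\neq\tom,\ \min I=n}\big)
\]
is a well-defined natural number. Every non-empty $I\in\ku K$ with $\min I=n$ is then contained in $\I{n}{k(n)}$.

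Second, we define
\[
        \om[2]=k(2)-1,\qquad \om[n+1]=\max\{\om[n]+1,\;k(n+1)-n\}.
\]
This sequence is strictly increasing in $\om$ with supremum $\om$, and it satisfies $\om[n]\geqslant k(n)-n=|\I{n}{k(n)}|-1$ for every $n$. Lemma~\ref{lem:finite-successors}(3), applied with $\alpha=0$, gives $\I{n}{k(n)}\in\ku S^\Lambda_{k(n)-n}$. The monotonicity in Lemma~\ref{lem:finite-successors}(2) then gives $\I{n}{k(n)}\in\ku S^\Lambda_{\om[n]}$. Finally, heredity from Lemma~\ref{lem:finite-successors}(1) yields $I\in\ku S^\Lambda_{\om[n]}$ for each $I\in\ku K$ with $\min I=n$, which completes the argument.

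There is no real obstacle here. The only step requiring care is the finiteness of $k(n)$, which is exactly where compactness of $\ku K$ enters through Lemma~\ref{lem:elementary}(1). The other point to check is that the levels $\ku S^\Lambda_m$ for finite $m$ do not depend on $\Lambda$, so that the recursion defining $\om[\cdot]$ is not circular.
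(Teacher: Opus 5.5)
Your proposal is correct and takes essentially the same route as the paper. You bound the right endpoints of intervals in $\ku K$ with left endpoint $n$ (finite by compactness), choose a strictly increasing $\om[n]\geqslant k(n)-n$, and conclude via Lemma~\ref{lem:finite-successors} and the limit clause with $d=n$. The only differences are cosmetic: you pass through the maximal interval $\I{n}{k(n)}$ and heredity instead of applying Lemma~\ref{lem:finite-successors}(3) to each $\I{n}{k}\in\ku K$ directly, and you write out an explicit recursion for the sequence.
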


\begin{proof}
For $n\in\Nstar$, set
\[
        m_{\ku K}(n)=\max\big(\{n\}\cup\mgd{k}{\I{n}{k}\in {\ku K}}\big).
\]
This is finite by Lemma~\ref{lem:elementary}.  Choose a strictly increasing function $g:\Nstar\to\om$ such that
\[
        g(n)\geqslant m_{\ku K}(n)-n
        \qquad(n\in\Nstar),
\]
and prescribe $\om[n]=g(n)$.  If $\I{n}{k}\in {\ku K}$, then Lemma~\ref{lem:finite-successors} and finite successor monotonicity give
\[
        \I{n}{k}\in\ku S^\Lambda_{k-n}
        \subseteq\ku S^\Lambda_{g(n)}
        =\ku S^\Lambda_{\om[n]}.
\]
Since $n\leqslant\I{n}{k}$, the limit definition yields $\I{n}{k}\in\ku S^\Lambda_\om$.
\end{proof}

A \emph{partial system of fundamental sequences} is a function $p$ whose domain is a set of countable limit ordinals and which assigns to every $\lambda\in\dom(p)$ a strictly increasing sequence $(\lambda[n])_{n\in\Nstar}$ cofinal in $\lambda$.  A full system $\Lambda$ extends $p$, written $\Lambda\supseteq p$, if it agrees with all these assignments.  For $\ku A\subseteq\FINstar$, write
\[
        p\Vdash\ku A\subseteq\Sdot_\alpha
\]
when $\ku A\subseteq\ku S^\Lambda_\alpha$ for every full system $\Lambda\supseteq p$.  This notation merely records persistence under full extensions; no forcing notion is involved.

\begin{prop}\label{prop:one-compact}
Let $p_0$ be a countable partial system of fundamental sequences and let ${\ku K}\subseteq\FINstar$ be compact.  Then one can find a countable partial system $p\supseteq p_0$ and an ordinal $\alpha<\om_1$ such that
\[
        p\Vdash {\ku K}\subseteq\Sdot_\alpha.
\]
\end{prop}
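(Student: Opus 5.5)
The plan is to argue by induction on the Cantor--Bendixson rank of $\ku K$, after a preliminary reduction. Since $\FINstar$ is countable, $\ku K$ is a countable compact metrizable space, so it is scattered and has countable rank. Enlarging $\ku K$ only makes the conclusion harder, so we may replace $\ku K$ by its hereditary closure, which is again compact. We then prove, by induction on the rank, that for every countable $p_0$ and every hereditary compact $\ku K$ there are a countable $p\supseteq p_0$ and $\alpha<\om_1$ with $p\Vdash\ku K\subseteq\Sdot_\alpha$. If $\ku K$ is finite, Lemma~\ref{lem:finite-successors}(3) together with finite successor monotonicity puts every member into $\ku S^\Lambda_{N}$ for $N=\max\mgd{|E|}{E\in\ku K}$, for every $\Lambda$, so $p=p_0$ works.

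For the inductive step, decompose by minima. For $n\in\Nstar$ put
\[
\ku K_n=\Mgd{B\in\ku K}{B=\tom\ \text{or}\ n<\min B,\ \{n\}\cup B\in\ku K}.
\]
Each $\ku K_n$ is hereditary and compact. Moreover $\{n\}\cup\ku K_n$ embeds in $\ku K$ near the point $\{n\}$, which is not the top-rank point $\tom$, and this should give $\ku K_n$ smaller rank than $\ku K$. If that rank drop fails for finitely many $n$, those $n$ will be handled by inducting on rank together with the finitely many exceptional minima.

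Next apply the inductive hypothesis successively. This yields countable $p_0\subseteq p_1\subseteq p_2\subseteq\cdots$ and ordinals $\alpha_n$ with $p_n\Vdash\ku K_n\subseteq\Sdot_{\alpha_n}$. If $A=\{n\}\cup B$ with $\tom\neq B\in\ku K_n$, then $\{n\}<B$ are two blocks in $\ku S^\Lambda_{\alpha_n}$ with $2\leqslant n=\min\{n\}$, so $A\in\ku S^\Lambda_{\alpha_n+1}$ by the successor clause. Thus $q=\bigcup_n p_n$ forces every $A\in\ku K$ with $\min A=n$ into $\Sdot_{\alpha_n+1}$.

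It remains to gather these countably many levels into one, and for this I would use freshness. Since $\dom(q)$ and $\{\alpha_n\}$ are countable, there is $\theta<\om_1$ above all of them, and every limit ordinal $>\theta$ is unconstrained. I would then prove a lifting lemma: given countably many $\beta_n<\theta$ and a countable $q$ with $\dom(q)\subseteq\theta$, there are a countable $p\supseteq q$, defined only on limits in $(\theta,\delta]$, and an ordinal $\delta$ such that
\[
p\Vdash\Mgd{A\in\Sdot_{\beta_n}}{\min A\geqslant n}\subseteq\Sdot_\delta\qquad\text{for all } n.
\]
Applying it with $\beta_n=\alpha_n+1$ and $\alpha=\delta$ finishes the proof, since every member of $\ku K$ with minimum $n$ has minimum $\geqslant n$ and $\tom$ lies in every $\ku S^\Lambda_\alpha$.

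The lifting lemma is the step I expect to be the main obstacle, because the hierarchy is not monotone under an arbitrary system. My first attempt is a single fresh limit $\delta=\theta+\om$ with $\delta[d]=\theta+d$, together with $\theta$ chosen as a fresh limit whose fundamental sequence satisfies $\theta[k]\geqslant\max_{i\leqslant k}\beta_i$. To compare $\ku S_{\beta_i}$ with $\ku S_{\theta[k]}$, I would prove the auxiliary statement
\[
\ku S^\Lambda_\beta\cap\Mgd{A}{\min A\geqslant 2}\subseteq\ku S^\Lambda_{\gamma}
\]
for $\beta<\gamma$ whenever all limits in $(\beta,\gamma]$ are fresh and assigned recursively with $\lambda[d]\geqslant\beta$. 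This would go by transfinite induction on $\gamma$: the successor case follows from Lemma~\ref{lem:finite-successors}(2), and in the limit case one uses $d=2\leqslant\min A$. Since there are countably many $\beta_i$ but only one choice of sequence per limit, I would interleave the requirements, using the index $k$ in $\theta[k]$ to serve $\beta_1,\ldots,\beta_k$. The restriction $\min A\geqslant n$ is exactly what makes this interleaving legitimate in the limit clause: a set $A$ with $\min A\geqslant k$ only needs to appear in $\ku S_{\theta[k]}$.
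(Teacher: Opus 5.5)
Your reduction to a hereditary $\ku K$, the splitting by minimum, the successive application of the inductive hypothesis along $p_0\subseteq p_1\subseteq\cdots$, and the use of the successor clause to attach the minimum are all sound and parallel the paper. Your induction measure needs a small repair. For the finitely many $n$ with $\{n\}$ in the top Cantor--Bendixson derivative of $\ku K$, the rank of $\ku K_n$ does not drop, so you must induct lexicographically on rank and degree. The paper avoids this by inducting on the rank of $s$ in the well-founded end-extension tree of $\ku K$. The genuine gap is the lifting lemma, which carries all the content of the proposition. The mechanism you propose for it cannot be carried out.

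Your auxiliary monotonicity statement is true, but its hypothesis cannot be arranged. That hypothesis requires every limit in $(\beta_i,\theta[k]]$ to be unconstrained and assigned with $\lambda[2]\geqslant\beta_i$. First, $q$ already fixes fundamental sequences at limits lying above the $\beta_i$, and you have no control over their $\lambda[2]$. Examples are the limits created when the inductive hypothesis produces $p_{i+1},p_{i+2},\ldots$, or limits already in $p_0$. Since the $\theta[k]$ are cofinal in $\theta$, the intervals $(\beta_i,\theta[k]]$ eventually contain such limits. Second, even with $q=\tom$ the hypothesis is unsatisfiable in general. Suppose $\beta_2=\lambda+1$ where $\lambda=\sup_{i\geqslant3}\beta_i$ and $\beta_3<\beta_4<\cdots$. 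Then $\theta[k]\geqslant\beta_2>\lambda$ for all $k\geqslant2$. So every comparison of $\ku S^\Lambda_{\beta_i}$ ($i\geqslant3$) with some $\ku S^\Lambda_{\theta[k]}$ passes through $\lambda$. This demands $\lambda[2]\geqslant\beta_i$ for all $i\geqslant3$, i.e.\ $\lambda[2]\geqslant\lambda$, which is impossible. One fundamental sequence for $\theta$ cannot serve all the $\beta_i$ through a monotonicity argument.

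The missing idea is to bypass the intermediate ordinals entirely. Give each $n$ its own fresh limit, e.g.\ $\gamma_n=\theta+\om\cdot(n-1)$, with $\gamma_n[2]=\beta_n$. Then $\ku S^\Lambda_{\beta_n}\subseteq\ku S^\Lambda_{\gamma_n}$ immediately by the limit clause with witness $d=2$. Now set $\delta=\theta+\om^2$ with $\delta[n]=\gamma_n$. Any $A\in\ku S^\Lambda_{\beta_n}$ with $\min A\geqslant n$ lies in $\ku S^\Lambda_{\delta[n]}$, and hence in $\ku S^\Lambda_\delta$. This is exactly the paper's step, which uses $\gamma_m[2]=\beta_m+1$ and $\lambda_s[m]=\gamma_m$. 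With it in place, the rest of your argument goes through.
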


\begin{proof}
The assertion is trivial when ${\ku K}=\tom$, so suppose that ${\ku K}\neq\tom$.  Replacing ${\ku K}$ by its hereditary closure, we may suppose that ${\ku K}$ is hereditary.  The hereditary closure is compact: it is the projection onto the first coordinate of the closed subset
\[
        \Mgd{(B,A)\in2^{\Nstar}\times {\ku K}}{B\subseteq A}
\]
of the compact space $2^{\Nstar}\times {\ku K}$.
For $s\in {\ku K}$, let
\[
        {\ku K}_s=\{\tom\}\cup
        \Mgd{t\in\FINstar\setminus\{\tom\}}
        {(s=\tom\;\text{or}\;s<t)\;\&\;s\cup t\in {\ku K}}.
\]
Order ${\ku K}$ by end-extension: $s$ precedes $t$ when $s\subsetneq t$ and every element of $t\setminus s$ is larger than every element of $s$.  Thus the immediate successors of $s$ are the sets $s\cup\{m\}\in {\ku K}$ with $m>\max s$, where $\max\tom=1$.  This tree is well founded.  Indeed, the union of an infinite branch would be the limit in $2^{\Nstar}$ of its finite initial segments and would therefore belong to the compact family ${\ku K}$, which is impossible.

We prove, by induction on the rank of $s$, that every countable partial system $q$ has a countable extension $q'$ and there is an ordinal $\alpha<\om_1$ such that
\[
        q'\Vdash {\ku K}_s\subseteq\Sdot_\alpha.
\]
If ${\ku K}_s=\{\tom\}$, take $q'=q$ and $\alpha=0$.

Suppose now that ${\ku K}_s\neq\{\tom\}$ and put
\[
        M_s=\mgd{m\in\Nstar}{m>\max s\;\&\;s\cup\{m\}\in {\ku K}}.
\]
The set $M_s$ is non-empty: if $\tom\neq t\in {\ku K}_s$ and $m=\min t$, then heredity gives $s\cup\{m\}\in {\ku K}$.  Enumerate it increasingly as $(m_j)_{j<\ell}$, where $0<\ell\leqslant\om$.  Starting with $q^{(0)}=q$, apply the induction hypothesis successively to the nodes $s\cup\{m_j\}$.  This gives an increasing sequence of countable partial systems and ordinals $\beta_{m_j}<\om_1$ such that, for each $j<\ell$, 
\[
        q^{(j+1)}\Vdash
        {\ku K}_{s\cup\{m_j\}}\subseteq\Sdot_{\beta_{m_j}}.
\]
Set
\[
        q^*=q\cup\bigcup_{j<\ell}q^{(j+1)}.
\]
The partial system $q^*$ is countable.  Moreover, every full extension of $q^*$ extends each $q^{(j+1)}$, so 
\[
        q^*\Vdash
        {\ku K}_{s\cup\{m\}}\subseteq\Sdot_{\beta_{m}}
\]
for all 
$m\in M_s$. 

Choose a countable ordinal $\theta$ such that
\[
        \theta>\sup\bigl(\dom(q^*)\cup\{\beta_m+1:m\in M_s\}\bigr),
\]
and, for $m\in\Nstar$, put
\[
        \gamma_m=\theta+\om\cdot(m-1),
        \qquad
        \lambda_s=\theta+\om^2.
\]
Since $m\geqslant2$, each $\gamma_m$ is a countable limit ordinal.  The ordinals $\gamma_m$ and $\lambda_s$ lie strictly above every member of $\dom(q^*)$, and $(\gamma_m)_{m\in\Nstar}$ is strictly increasing and cofinal in $\lambda_s$.  For every $m\in M_s$, assign to $\gamma_m$ a fundamental sequence whose first term is
\[
        \gamma_m[2]=\beta_m+1.
\]
These assignments are compatible, since the ordinals $\gamma_m$ are distinct and lie outside $\dom(q^*)$.  Every full system $\Lambda$ extending the enlarged partial system then satisfies
\[
        {\ku K}_{s\cup\{m\}}\subseteq\ku S^\Lambda_{\gamma_m}
        \qquad(m\in M_s).
\]
Indeed, such a system satisfies ${\ku K}_{s\cup\{m\}}\subseteq\ku S^\Lambda_{\beta_m}\subseteq\ku S^\Lambda_{\beta_m+1}$, and every non-empty member of the latter family belongs to $\ku S^\Lambda_{\gamma_m}$ with limit witness $d=2$, since its minimum is at least $2$.  Finally prescribe
\[
        \lambda_s[m]=\gamma_m
        \qquad(m\in\Nstar),
\]
and denote the resulting countable partial system by $q'$.

Let $\Lambda\supseteq q'$ be full and fix $\tom\neq A\in {\ku K}_s$.  Put $m=\min A$ and $B=A\setminus\{m\}$.  Since $s\cup A\in {\ku K}$ and ${\ku K}$ is hereditary, $s\cup\{m\}\in {\ku K}$.  Moreover, $m>\max s$, with the convention $\max\tom=1$.  Thus $m\in M_s$.  If $B=\tom$, then $A=\{m\}\in\ku S^\Lambda_{\lambda_s+1}$.  If $B\neq\tom$, then
\[
        s\cup\{m\}<B
        \quad\text{and}\quad
        (s\cup\{m\})\cup B=s\cup A\in {\ku K},
\]
so $B\in {\ku K}_{s\cup\{m\}}$.  Consequently,
\[
        B\in\ku S^\Lambda_{\gamma_m}
        =\ku S^\Lambda_{\lambda_s[m]}.
\]
As $m\leqslant B$, the limit definition gives $B\in\ku S^\Lambda_{\lambda_s}$.  Also $\{m\}\in\ku S^\Lambda_{\lambda_s}$ and $2\leqslant\{m\}<B$, whence
\[
        A=\{m\}\cup B\in\ku S^\Lambda_{\lambda_s+1}.
\]
Thus $q'\Vdash {\ku K}_s\subseteq\Sdot_{\lambda_s+1}$, completing the rank induction.  Applying the assertion to $s=\tom$ and $q=p_0$ proves the proposition.
\end{proof}

%%%%%%%%%%%%%%%%%%%%%%%%%%%
%%%%%%%%%%%%%%%%%%%%%%%%%%%
%%%%%%%%%%%%%%%%%%%%%%%%%%%
%%%%%%%%%%%%%%%%%%%%%%%%%%%
%%%%%%%%%%%%%%%%%%%%%%%%%%%
%%%%%%%%%%%%%%%%%%%%%%%%%%%

\section{The dominating number}

For a topological space $X$, let $\ku K(X)$ denote the family of compact subsets of $X$, ordered by inclusion.  The space $\FINstar$ is countable, metrizable and has no isolated points: every basic neighbourhood of a finite set fixes only finitely many coordinates and therefore contains a strictly larger finite set.  Hence $\FINstar$ is homeomorphic to $\Q$ by Sierpi\'nski's theorem \cite{Sierpinski}; see also the modern proof in \cite{Dashiell}.  Fremlin proved \cite{Fremlin} that
\[
        \cof(\ku K(\Q),\subseteq)=\dnum;
\]
this computation is also recalled in \cite{GartsideMamatelashvili}.  Consequently,
\[
        \cof(\ku K(\FINstar),\subseteq)=\dnum.
\]

\begin{prop}\label{prop:fixed-lambda}
For a fixed system of fundamental sequences $\Lambda$, the following are equivalent.
\begin{enumerate}
\item $(s^\Lambda_\alpha)_{\alpha<\om_1}$ is $\leqslant^*$-cofinal in $(\Nstar)^{\Nstar}$.
\item For every compact interval family $\ku K\subseteq\FINstar$, there is $\alpha<\om_1$ such that $\ku K\setminus\ku S^\Lambda_\alpha$ is finite.
\end{enumerate}
\end{prop}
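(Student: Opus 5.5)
The plan is to mimic the proof of Lemma~\ref{lemme:b}(2)$\saa$(3), replacing ``infinite intersection'' by ``cofinite containment'' and ``unbounded'' by ``cofinal''. Both directions go through the function $m_{\ku K}$ from the proof of Proposition~\ref{prop:interval-omega} and the interval families $\{\tom\}\cup\mgd{\I{n}{g(n)}}{n\in\Nstar}$. Two simple observations will be used throughout. First, since $\ku S^\Lambda_\alpha$ is hereditary (Lemma~\ref{lem:finite-successors}), $\I{n}{k}\in\ku S^\Lambda_\alpha$ if and only if $k\leqslant s^\Lambda_\alpha(n)$. Second, $s^\Lambda_\alpha(n)\geqslant n$ because singletons lie in $\ku S^\Lambda_\alpha$.

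(1)$\saa$(2). Let $\ku K$ be a compact interval family. If $\ku K$ is finite, any $\alpha$ works, so assume it is infinite. By Lemma~\ref{lem:elementary}, the function $m_{\ku K}(n)=\max(\{n\}\cup\mgd{k}{\I{n}{k}\in\ku K})$ is finite-valued. By (1) pick $\alpha$ with $m_{\ku K}\leqslant^* s^\Lambda_\alpha$, say $m_{\ku K}(n)\leqslant s^\Lambda_\alpha(n)$ for all $n\geqslant N$. Then every $\I{n}{k}\in\ku K$ with $n\geqslant N$ satisfies $k\leqslant s^\Lambda_\alpha(n)$, and hence lies in $\ku S^\Lambda_\alpha$ by heredity. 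The members of $\ku K$ with $\min<N$ are finitely many, by the compactness criterion in Lemma~\ref{lem:elementary}(1)(ii). So $\ku K\setminus\ku S^\Lambda_\alpha$ is finite.

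(2)$\saa$(1). Given $g\in(\Nstar)^{\Nstar}$, replace it by $g'(n)=\max(g(n),n)$; dominating $g'$ suffices. Let $\ku K=\{\tom\}\cup\mgd{\I{n}{g'(n)}}{n\in\Nstar}$. This is a compact interval family, because for each $n$ only the intervals starting below $n$ fail $n\leqslant I$, and there are finitely many of them. By (2) there is $\alpha$ with $\I{n}{g'(n)}\in\ku S^\Lambda_\alpha$ for all but finitely many $n$. Since the map $n\mapsto\I{n}{g'(n)}$ is injective, finitely many exceptional members means finitely many exceptional $n$. For all other $n$ we get $g'(n)\leqslant s^\Lambda_\alpha(n)$ by the definition of $s^\Lambda_\alpha$. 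Hence $g\leqslant^* s^\Lambda_\alpha$, and the family is cofinal.

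There is no real obstacle here. The only points needing care are the injectivity remark and the compactness of $\ku K$ in the second direction, and the use of heredity to pass from the endpoint bound to membership in the first.
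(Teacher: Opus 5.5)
Your proposal is correct and follows the paper's proof essentially step for step. For (1)$\saa$(2) you dominate the endpoint function $m_{\ku K}$ (the paper's $g_{\ku K}$) by some $s^\Lambda_\alpha$ and then use heredity together with compactness; for (2)$\saa$(1) you use the compact family $\{\tom\}\cup\mgd{\I{n}{\max\{g(n),n\}}}{n\in\Nstar}$. Your injectivity remark and your separate treatment of finite $\ku K$ are small points of care that the paper leaves implicit.
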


\begin{proof}
Assume (1) and let $\ku K$ be a compact interval family.  Define
\[
        g_{\ku K}(n)=\max\big(\{n\}\cup\mgd{k}{\I{n}{k}\in \ku K}\big).
\]
By Lemma~\ref{lem:elementary}, $g_{\ku K}(n)$ is finite.  Choose $\beta<\om_1$ with $g_{\ku K}\leqslant^*s^\Lambda_\beta$.  Every interval in $\ku K$ with sufficiently large left endpoint $n$ is then a subset of $\I{n}{s^\Lambda_\beta(n)}$ and hence belongs to $\ku S^\Lambda_\beta$ by heredity.  Compactness leaves only finitely many intervals with bounded left endpoint, so $\ku K\setminus\ku S^\Lambda_\beta$ is finite.

Conversely, suppose (2), let $u:\Nstar\to\Nstar$, and put $g(n)=\max\{u(n),n\}$.  The family
\[
        \ku K=\{\tom\}\cup\mgd{\I{n}{g(n)}}{n\in\Nstar}
\]
is compact by Lemma~\ref{lem:elementary}.  Choose $\alpha<\om_1$ so that $\ku K\setminus\ku S^\Lambda_\alpha$ is finite.  Then, for all but finitely many $n$,
\[
        \I{n}{g(n)}\in\ku S^\Lambda_\alpha,
\]
and therefore
\[
        u(n)\leqslant g(n)\leqslant s^\Lambda_\alpha(n).
\]
Thus $u\leqslant^*s^\Lambda_\alpha$, proving cofinality.
\end{proof}

\begin{thm}\label{thm:d}
The following assertions are equivalent.
\begin{enumerate}
\item $\dnum=\om_1$.
\item There is a system of fundamental sequences $\Lambda$ such that, for every compact $\ku K\subseteq\FINstar$, there is $\alpha<\om_1$ with
\[
        \ku K\subseteq\ku S^\Lambda_\alpha.
\]
\item There is a system of fundamental sequences $\Lambda$ such that $(s^\Lambda_\alpha)_{\alpha<\om_1}$ is $\leqslant^*$-cofinal in $(\Nstar)^{\Nstar}$.
\item There is a system of fundamental sequences $\Lambda$ such that, for every compact interval family $\ku K\subseteq\FINstar$, there is $\alpha<\om_1$ for which $\ku K\setminus\ku S^\Lambda_\alpha$ is finite.
\end{enumerate}
\end{thm}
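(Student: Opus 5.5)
We prove (1)$\saa$(2)$\saa$(4)$\saa$(3)$\saa$(1). The implication (3)$\saa$(1) is immediate: a $\leqslant^*$-cofinal family of size at most $\aleph_1$ gives $\dnum\leqslant\om_1$, hence $\dnum=\om_1$. The equivalence (3)$\equi$(4) is Proposition~\ref{prop:fixed-lambda} applied to a single $\Lambda$. For (2)$\saa$(4), every compact interval family is compact, so (2) yields $\alpha$ with $\ku K\subseteq\ku S^\Lambda_\alpha$, and in particular $\ku K\setminus\ku S^\Lambda_\alpha=\tom$ is finite. The substance is therefore (1)$\saa$(2).

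Assume $\dnum=\om_1$. By Fremlin's theorem, as transferred above to $\FINstar$, we may fix a family $(\ku K_\xi)_{\xi<\om_1}$ of compact subsets of $\FINstar$ that is cofinal in $(\ku K(\FINstar),\subseteq)$. We build an increasing chain of countable partial systems $(p_\xi)_{\xi<\om_1}$ together with ordinals $\alpha_\xi<\om_1$ such that
\[
        p_{\xi+1}\Vdash\ku K_\xi\subseteq\Sdot_{\alpha_\xi}.
\]
Put $p_0=\tom$. At successor stages, apply Proposition~\ref{prop:one-compact} to $p_\xi$ and $\ku K_\xi$ to obtain $p_{\xi+1}\supseteq p_\xi$ and $\alpha_\xi$. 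At limit stages $\eta<\om_1$, set $p_\eta=\bigcup_{\xi<\eta}p_\xi$. This is countable as a countable union of countable sets, and it is a well-defined function because the chain is increasing, so the assignments are compatible.

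Let $p=\bigcup_{\xi<\om_1}p_\xi$, again a well-defined partial system, and extend it to a full system $\Lambda$ by choosing arbitrary fundamental sequences for the limit ordinals outside $\dom(p)$. Then $\Lambda\supseteq p_{\xi+1}$ for every $\xi$, so $\ku K_\xi\subseteq\ku S^\Lambda_{\alpha_\xi}$ by the meaning of $\Vdash$. Given any compact $\ku K\subseteq\FINstar$, cofinality supplies $\xi$ with $\ku K\subseteq\ku K_\xi\subseteq\ku S^\Lambda_{\alpha_\xi}$, which proves (2).

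The main point to watch is that the recursion never gets stuck. This rests on two facts: persistence under full extensions, which is built into the definition of $\Vdash$, and the countability of the partial systems at every stage below $\om_1$, which is needed to invoke Proposition~\ref{prop:one-compact}. Both hold by construction, so the heavy lifting lies entirely in Proposition~\ref{prop:one-compact} and in Fremlin's theorem.
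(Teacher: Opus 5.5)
Your proposal is correct and follows the paper's proof essentially step for step. For (1)$\saa$(2) you use the same transfinite recursion of countable partial systems along a Fremlin cofinal family $(\ku K_\xi)_{\xi<\om_1}$, with Proposition~\ref{prop:one-compact} at successor stages and unions at limits, and the remaining implications come from the trivial (2)$\saa$(4), Proposition~\ref{prop:fixed-lambda} for (3)$\equi$(4), and (3)$\saa$(1) via $\om_1\leqslant\dnum$.
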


\begin{proof}
Assume $\dnum=\om_1$.  By the preceding cofinality identity, choose a cofinal family $(\ku K_\xi)_{\xi<\om_1}$ in $\ku K(\FINstar)$.  We construct a continuous increasing sequence $(p_\xi)_{\xi<\om_1}$ of countable partial systems.  Let $p_0=\tom$.  At a limit stage $\delta<\om_1$, put
\[
        p_\delta=\bigcup_{\xi<\delta}p_\xi,
\]
which is countable since $\delta$ is countable.  Having constructed $p_\xi$, apply Proposition~\ref{prop:one-compact} to obtain a countable extension $p_{\xi+1}\supseteq p_\xi$ and $\alpha_\xi<\om_1$ such that
\[
        p_{\xi+1}\Vdash \ku K_\xi\subseteq\Sdot_{\alpha_\xi}.
\]
The union
\[
        p^*=\bigcup_{\xi<\om_1}p_\xi
\]
is a partial system of fundamental sequences, since the recursion is increasing and therefore assigns at most one sequence to each limit ordinal.  Extend it arbitrarily to a full system $\Lambda$.  If $\ku K\subseteq\FINstar$ is compact, choose $\xi<\om_1$ with $\ku K\subseteq \ku K_\xi$.  Then
\[
        \ku K\subseteq \ku K_\xi\subseteq\ku S^\Lambda_{\alpha_\xi}.
\]
Thus (1) implies (2).

The implication (2)$\saa$(4) is immediate.  Proposition~\ref{prop:fixed-lambda} gives (3)$\equi$(4) for every fixed $\Lambda$.  Finally, (3) implies $\dnum=\om_1$, since it provides a $\leqslant^*$-cofinal family of size $\om_1$ and always $\om_1\leqslant\dnum$.
\end{proof}

\begin{cor}\label{cor:d-large}
The following assertions are equivalent.
\begin{enumerate}
\item $\dnum>\om_1$.
\item For every system of fundamental sequences $\Lambda$, there is a compact interval family $\ku K\subseteq\FINstar$ such that
\[
        \ku K\setminus\ku S^\Lambda_\alpha\text{ is infinite}
\]
for every $\alpha<\omega_1$.
\end{enumerate}
\end{cor}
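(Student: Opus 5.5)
The plan is to obtain Corollary~\ref{cor:d-large} by negating Theorem~\ref{thm:d}, just as Corollary~\ref{cor:b-large} was obtained from Theorem~\ref{thm:b}. Since $\om_1\leqslant\dnum$ always holds, assertion (1) of the corollary is exactly the negation of Theorem~\ref{thm:d}(1). Theorem~\ref{thm:d} shows that this is in turn equivalent to the negation of Theorem~\ref{thm:d}(4).

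The negation of Theorem~\ref{thm:d}(4) states the following. For every system of fundamental sequences $\Lambda$ there is a compact interval family $\ku K\subseteq\FINstar$ such that no $\alpha<\om_1$ makes $\ku K\setminus\ku S^\Lambda_\alpha$ finite. In other words, $\ku K\setminus\ku S^\Lambda_\alpha$ is infinite for every $\alpha<\om_1$. This is verbatim assertion (2) of the corollary.

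The only thing to check is that the quantifiers line up: the existential over $\Lambda$ in Theorem~\ref{thm:d}(4) becomes the universal in (2), and the universal over $\ku K$ becomes existential. No extraction step, such as the successive intervals used in Corollary~\ref{cor:b-large}, is needed, because the corollary is phrased directly in terms of compact interval families. There is no real obstacle here. The one point worth stating explicitly is the use of $\om_1\leqslant\dnum$, which turns ``$\dnum\neq\om_1$'' into ``$\dnum>\om_1$''.
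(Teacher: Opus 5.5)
Your proposal is correct and matches the paper's proof. The paper likewise notes that, since $\om_1\leqslant\dnum$, assertion (1) is the negation of Theorem~\ref{thm:d}(1). By the theorem, assertion (2) is then exactly the negation of Theorem~\ref{thm:d}(4), with no extraction step needed.
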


\begin{proof}
Since $\om_1\leqslant\dnum$, this is precisely the negation of Theorem~\ref{thm:d}(4).
\end{proof}


\begin{bibdiv}
\begin{biblist}




\bib{AlspachArgyros}{article}{
   author={Alspach, Dale E.},
   author={Argyros, Spiros A.},
   title={Complexity of weakly null sequences},
   journal={Dissertationes Math. (Rozprawy Mat.)},
   volume={321},
   date={1992},
   pages={1--44},
}


\bib{ADKM}{article}{
   author={Argyros, S. A.},
   author={Deliyanni, I.},
   author={Kutzarova, D. N.},
   author={Manoussakis, A.},
   title={Modified mixed Tsirelson spaces},
   journal={J. Funct. Anal.},
   volume={159},
   date={1998},
   number={1},
   pages={43--109},
}



\bib{AMT}{article}{
   author={Argyros, S. A.},
   author={Mercourakis, S.},
   author={Tsarpalias, A.},
   title={Convex unconditionality and summability of weakly null sequences},
   journal={Israel J. Math.},
   volume={107},
   date={1998},
   pages={157--193},
}





\bib{beanland}{article}{
   author={Beanland, Kevin},
   author={Chu, H\`ung Vi\d{\^e}t},
   title={Schreier families and $\mathcal {F}$-(almost) greedy bases},
   journal={Canad. J. Math.},
   volume={76},
   date={2024},
   number={4},
   pages={1379--1399},
}



\bib{Blass}{misc}{
   author={Blass, Andreas},
   title={Combinatorial cardinal characteristics of the continuum},
   note={in \emph{Handbook of Set Theory}, M. Foreman and A. Kanamori (eds.), Springer, Dordrecht, 2010, pp. 395--489},
}

\bib{Dashiell}{article}{
   author={Dashiell, Frederick K., Jr.},
   title={Countable metric spaces without isolated points},
   journal={Amer. Math. Monthly},
   volume={128},
   date={2021},
   number={3},
   pages={265--267},
  }

\bib{Fremlin}{article}{
   author={Fremlin, D. H.},
   title={Families of compact sets and Tukey's ordering},
   journal={Atti Sem. Mat. Fis. Univ. Modena},
   volume={39},
   date={1991},
   number={1},
   pages={29--50},
}

\bib{GartsideMamatelashvili}{article}{
   author={Gartside, Paul},
   author={Mamatelashvili, Ana},
   title={Tukey order, calibres and the rationals},
   journal={Ann. Pure Appl. Logic},
   volume={172},
   date={2021},
   number={1},
   pages={102873},
 }



\bib{OTW}{article}{
   author={Odell, Edward},
   author={Tomczak-Jaegermann, Nicole},
   author={Wagner, Roy},
   title={Proximity to $\ell_1$ and distortion in asymptotic $\ell_1$ spaces},
   journal={J. Funct. Anal.},
   volume={150},
   date={1997},
   number={1},
   pages={101--145},
}


\bib{Schreier}{article}{
   author={Schreier, J.},
   title={Ein {G}egenbeispiel zur {T}heorie der schwachen {K}onvergenz},
   journal={Studia Math.},
   volume={2},
   date={1930},
   number={1},
   pages={58--62},
 
}


\bib{Shiliaev}{misc}{
   author={Shiliaev, Mark},
   title={On unconditionality and higher-order Schreier unconditionality},
   note={arXiv:2510.02783v1 [math.FA]},
   date={2025},
}

\bib{Sierpinski}{article}{
   author={Sierpi\'nski, Wac\l aw},
   title={Sur une propri\'et\'e topologique des ensembles d\'enombrables denses en soi},
   journal={Fund. Math.},
   volume={1},
   date={1920},
   pages={11--16},
 }

\end{biblist}
\end{bibdiv}
\end{document}